\documentclass[12pt]{elsarticle}
\usepackage{lineno,hyperref}
\usepackage{amsmath}
\usepackage{amsthm}
\usepackage{amssymb}
\usepackage{color}
\usepackage{comment}
\usepackage{graphicx}
\usepackage{enumitem}
\usepackage{comment}

\newtheorem{theorem}{Theorem}[section]

\newtheorem{lemma}[theorem]{Lemma}

\begin{document}

\begin{frontmatter}

\title{On distance integral and distance Laplacian integral graphs}

\author[Pirzada]{S. Pirzada}
\ead{pirzadasd@kashmiruniversity.ac.in}
\address[Pirzada]{Department of Mathematics, University of Kashmir, Srinagar, India}

\author[Pirzada]{Ummer Mushtaq}
\ead{ummermushtaq1995@gmail.com}

\author[Leoaddress1]{Leonardo de Lima}
 \ead{leonardo.delima@ufpr.br}
\address[Leoaddress1]{Departamento de Administração Geral e Aplicada, Universidade Federal do Paran\'a, Brasil}

\begin{abstract}
Let $G$ be a connected graph on $n$ vertices and let $D(G)$ and $D^{L}(G)$ be the distance and the distance Laplacian matrices associated with  $G$. A graph $G$ is said to be $D$-integral (resp. $D^L$-integral) if all eigenvalues of  $D(G)$ (resp. $D^L(G)$) are integers. In this paper, we obtain various conditions under which the graphs  $a\overline{K_m}\nabla C_n$ and $K_{p,p}\nabla C_n$ are distance integral. We also obtain conditions on $m$, $n$ under which the dumbbell graph $\boldsymbol{DB}(W_{m,n})$ is $D^L$-integral. 
\end{abstract}

\begin{keyword}
 distance matrix \sep  Laplacian distance matrix \sep Distance integral graphs \sep Laplacian distance integral graphs
 \MSC[2020] 05C50
\end{keyword}

\end{frontmatter}
 
\section{Introduction}
We consider a connected simple graph $G = (V, E) $, where $V$ represents the vertex set with cardinality $n$ and $E$ denotes the edge set. The adjacency matrix of graph $G$, denoted by $A(G)=a_{ij}  $, is a square matrix  with $ a_{ij} = 1$ if the vertex $v_i $ is adjacent to $ v_j$, and $a_{ij} = 0 $ otherwise. A graph is regular if each vertex has the same degree. If $M$ represents the matrix associated with a graph $G$, then $G$ is termed  $M$-integral if all eigenvalues of  $M$ are integers. The  $M$-spectrum of a graph $G$ encompasses all eigenvalues of its matrix $M$ along with their respective multiplicities. Let $d_{G}(i,j)$ be the length of the shortest path between vertices $v_i$ and $v_{j}.$ The $n \times n$ distance matrix of $G$, denoted by $D(G)$, has $d_{G}(i,j)$ as its $(i,j)$-entry. Notice that $D(G)$ is symmetric and so has real eigenvalues. Write  $Tr(G)$ for the transmission matrix of $G$ which is the
diagonal matrix of the row sums of $D(G)$. The distance Laplacian of $G$ is given by $D^{L}(G) = Tr(G)-D(G).$
The complement of $G$, denoted as $\overline{G}$, is the graph sharing the same vertex set as $ G $, where two vertices are adjacent if and only if they are not adjacent in $G $. The union of two graphs $ G_1$ and  $G_2$, denoted $G_1\cup G_2$, comprises the vertex set $V(G_1) \cup V(G_2)$ and the edge set $E(G_1) \cup E(G_2)$. The join $G_1 \nabla G_2$ of two graphs $G_1$ and $G_2 $ is the graph obtained from  $G_1\cup G_2$ by adding all possible edges from the vertices of $G_1$ to the vertices of $G_2$. Conventionally, $ K_n $ denotes the complete graph, $C_n$  represents the cycle and $ K_{p,q}$ denotes the complete bipartite graph on $p+q$ vertices. Given $m \geq 2$ and $n \geq 3,$ the generalized wheel graph $W_{m,n}$ is the graph obtained by the join $\overline{K_{m}} \nabla C_{n}$. The dumbbell graph, denoted by $DB(W_{m,n})$, is obtained from two copies of generalized wheel graph $W_{m,n}$ by connecting
$m$ vertices in one copy with the corresponding vertices in the other copy  \cite{19b}. For more definitions and notations, we refer to \cite{sp}.\\
\indent In spectral graph theory, a fundamental question is about identifying graphs where all the eigenvalues of a matrix associated with the graph are integers. This quest gained momentum in 1974 when Harary and Schwenk \cite{5} introduced the concept of integral graphs, igniting widespread interest and prompting extensive research in this area. In 2002, Balaliska \emph{et al.} \cite{2} presented a comprehensive survey of findings concerning integral graphs. For further insights and results, we refer to \cite{1,MH14,2,3,9,13}.\\
\indent Regarding the $D$-integral graph, the computational results of \cite{12} point out that there are only 49 $D$-integral graphs up to 9 vertices. The rarity of $D$-integral graphs is a motivation to characterize graphs with this property, and some papers in the literature have addressed this subject. In \cite{4}, Ili\'{c} characterized the distance spectra of integral circulant graphs and demonstrated their $D$-integrality. In 2015, Pokorn\'{y} \textit{\emph{et al.}}\cite{12} delineated $D$-integral graphs within various classes, including complete split graphs, multiple complete split-like graphs, extended complete split-like graphs, and multiple extended complete split-like graphs, also proving the non-$D$-integrality of nontrivial trees. In the same year, Yang and Wang\cite{15} established a necessary and sufficient condition for the $D$-integrality of complete $r$-partite graphs $K_{p_1,p_2,\dots,p_r} \cong K_{a_{1}p_{1},a_{2}p_{2},\dots,a_{s}p_{s}}$ and generated numerous new classes of $D$-integral graphs for $s=1, 2, 3, 4$. In 2016, Zhao \emph{et al.} \cite{11} provided necessary and sufficient conditions for determining the $D^Q$-integrality of complete $r$-partite graph such as $K_{{p}_1,p_2,\dots,p_r}\cong K_{a_{1}p_{1},a_{2}p_{2},\dots,a_{s}p_{s}}$ and introduced new classes of $D^Q$-integral graphs for $s = 1, 2, 3$. Subsequently, in 2017, da Silva \emph{et al.}\cite{P14} investigated the concept of $D^L$-integrality and integrality of the distance signless Laplacian eigenvalues in various types of graphs, including complete split graphs, multiple complete split-like graphs, extended complete split-like graphs, and multiple extended complete split-like graphs, drawing on Pokorn\'{y}'s work \cite{12}. Additional insights and advancements in this field can be found in \cite{7,10}. In 2023, Lu \emph{et al.} \cite{9} obtained all $D$-integral generalized wheel graphs $aK_m\nabla C_n$. \\

\indent The rest of this paper is organized as follows. In Section 2, we completely characterize all $D$-integral graphs of the forms   $a\overline{K_m}\nabla C_n$ and $K_{p,p}\nabla C_n,$ and prove that there are no $D$-integral dumbbell graphs. In Section 3, we derive all $D^L$-integral dumbbell graphs $\boldsymbol{DB}(W_{m,n})$.

\section{$D$-integral of $a\overline{K_m}\nabla C_n$ and $K_{p,p}\nabla C_n$ }

In this section, we shall provide the distance spectrum of the generalized graph $a\overline{K_m}\nabla C_n$ and $K_{p,p}\nabla C_n$. We denote $a\overline{K_m}\nabla C_n$ by $EGW(a, m, n)$ and completely determine all $D$-integral  $EGW(a, m, n)$ graphs.
The following observations will be used in the sequel.
\begin{lemma}\label{l1}\em \cite{16}
	The adjacency spectrum of the cycle $C_n$ is
		\begin{align*}
		\Bigg\{2cos \left(\frac{2k\pi}{n}\right): k=1,2,\dots,n-1\Bigg\}.
	\end{align*}
\end{lemma}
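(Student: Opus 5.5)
The plan is to exploit the fact that $A(C_n)$ is a circulant matrix and to exhibit an explicit basis of eigenvectors built from the $n$-th roots of unity. Label the vertices of $C_n$ as $v_0,v_1,\dots,v_{n-1}$ with $v_j$ adjacent to $v_{j\pm1}$, indices taken modulo $n$. Then $A(C_n)=P+P^{T}=P+P^{-1}$, where $P$ is the cyclic permutation matrix sending $e_j\mapsto e_{j+1}$.

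Let $\omega=e^{2\pi i/n}$. For each $k\in\{0,1,\dots,n-1\}$ define $x_k=(1,\omega^{k},\omega^{2k},\dots,\omega^{(n-1)k})^{T}$. The key computation is the $j$-th entry of $A(C_n)x_k$, which equals
\[
\omega^{(j-1)k}+\omega^{(j+1)k}=\omega^{jk}\left(\omega^{k}+\omega^{-k}\right)=2\cos\left(\frac{2k\pi}{n}\right)\omega^{jk}.
\]
This holds at the wrap-around indices $j=0$ and $j=n-1$ as well, precisely because $\omega^{n}=1$. Hence $x_k$ is an eigenvector of $A(C_n)$ with eigenvalue $2\cos(2k\pi/n)$.

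It remains to show that these $n$ vectors account for the whole spectrum with the correct multiplicities. The vectors $x_0,\dots,x_{n-1}$ are the columns of the Vandermonde (Fourier) matrix in the distinct nodes $1,\omega,\dots,\omega^{n-1}$. That matrix is nonsingular; equivalently, $\langle x_k,x_l\rangle=\sum_j\omega^{j(k-l)}=n\delta_{kl}$. So the $x_k$ form a basis of $\mathbb{C}^n$ consisting of eigenvectors, and the multiset $\{2\cos(2k\pi/n)\}$ is exactly the spectrum of $A(C_n)$ counted with multiplicity.

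Because $A(C_n)$ is real symmetric, repeated values such as $2\cos(2k\pi/n)=2\cos(2(n-k)\pi/n)$ cause no difficulty. If real eigenvectors are preferred, take the real and imaginary parts of $x_k$. There is no genuine obstacle in this argument; the only point needing care is the indexing. The full spectrum is obtained for $k=0,1,\dots,n-1$, and the term $k=0$ contributes the eigenvalue $2$, which is the degree of the $2$-regular graph $C_n$ with eigenvector the all-ones vector. I would therefore read the range in the statement as $k=0,1,\dots,n-1$. With that reading the list has exactly $n$ entries, matching the order of $A(C_n)$, whereas the literal range $k=1,\dots,n-1$ lists only $n-1$ of them.
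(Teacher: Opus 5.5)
Your argument is correct and is the standard one. The paper gives no proof here, only a citation to \cite{16}, where the spectrum of $C_n$ is obtained exactly as you do, from the eigenvectors $(\omega^{jk})_j$ of the circulant $P+P^{-1}$. Your remark on the index range is also right: read literally, $k=1,\dots,n-1$ omits the eigenvalue $2$, so the full spectrum needs $k=0,1,\dots,n-1$ (equivalently $k=1,\dots,n$). This matches how the paper actually uses the lemma, treating $2$ separately via $e_n$ in the proof of Lemma~\ref{l4}.
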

\begin{lemma}\label{l2}\em\cite{16}
	Let $M$ be a square matrix of order $n$ that can be written in the blocks as
		\[ M=\begin{pmatrix}
			M_{11}& M_{12}  &\dots&M_{1k}\\
			M_{21}&M_{22}&\dots&M_{2k}\\
			\dots&\dots&\dots&\dots\\
			\dots&\dots&\dots&\dots\\
			M_{k1}&M_{k2}&\dots&M_{kk}
	\end{pmatrix}\]
	where $	M_{ij},1\leq i,j\leq k,$ is the $n_i\times m_j$ matrix such that its lines have constant row sum equal to $c_{ij}$. Let  $\overline{M}=
	[c_{ij}]_{k\times k}$. Then the eigenvalues of $\overline{M}$ are also eigenvalues of $M$.
	\begin{lemma}\label{l3}\em\cite{17}
	Let $M$ be defined as in Lemma \ref{l2} such that $M_{ij}= s_{ij}J_{n_{i},n_{j}}$ for $i\neq j$, and $M_{ii} =s_{ii}J_{n_{i},n_{i}}+ p_iI_{n_{i}}$. Then the quotient matrix of $M$ is $B=b_{ij}$ with $b_{ij}=s_{ij}n_j$ if $i\neq j$, and $b_{ii}=s_{ii}n_i+p_i$. Moreover,
	\begin{align*}
		\sigma(M)=\sigma(B)\cup\Big\{p_1^{n_1-1},p_2^{n_2-1},\dots,p_k^{n_k-1}\Big\},
	\end{align*}
		where $\sigma(A)$ represents the spectrum of matrix $A$.
	\end{lemma}
\end{lemma}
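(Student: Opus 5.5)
The statement to prove is Lemma \ref{l3}; Lemma \ref{l2} is only used to place the eigenvalues of $B$ inside $\sigma(M)$. The strategy is to build an explicit basis of $\mathbb{R}^n$, with $n=n_1+\dots+n_k$, adapted to the block structure. It will consist of $n-k$ "local" vectors that each live inside a single block and sum to zero there, plus $k$ vectors on which $M$ acts through the quotient matrix $B$. We then count eigenvalues with multiplicity.

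\textbf{Step 1: the entries of $B$.} The row sums of each block are immediate. Each row of $M_{ij}=s_{ij}J_{n_i,n_j}$ ($i\neq j$) sums to $s_{ij}n_j$. Each row of $M_{ii}=s_{ii}J_{n_i,n_i}+p_iI_{n_i}$ sums to $s_{ii}n_i+p_i$. This gives $b_{ij}$ exactly as claimed, and Lemma \ref{l2} already yields $\sigma(B)\subseteq\sigma(M)$ as a set.

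\textbf{Step 2: the local eigenvectors.} Fix $i$ and take any $x\in\mathbb{R}^{n_i}$ with $\mathbf{1}^{T}x=0$. Let $\hat x\in\mathbb{R}^n$ be $x$ placed in the $i$-th block and zero elsewhere. Then $J_{n_j,n_i}x=0$ for every $j$. So the only surviving term of $M\hat x$ comes from $p_iI_{n_i}$, and $M\hat x=p_i\hat x$. The vectors $x$ with $\mathbf{1}^Tx=0$ form a space of dimension $n_i-1$. Hence $p_i$ is an eigenvalue with an eigenspace $U_i$ of dimension $n_i-1$ supported on block $i$. The spaces $U_1,\dots,U_k$ have disjoint supports, so their sum is direct, of total dimension $n-k$.

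\textbf{Step 3: the quotient part.} Let $W=\mathrm{span}\{\chi_1,\dots,\chi_k\}$, where $\chi_i$ is the indicator vector of block $i$. By Step 1, $M\chi_j=\sum_i b_{ij}\chi_i$ (using $s_{ij}=s_{ji}$ if $M$ is symmetric; otherwise one argues with $M^T$ or directly with row sums). So $W$ is $M$-invariant and $M|_W$ is represented by $B$ (or $B^T$, which has the same spectrum).

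Moreover $W$ is exactly the orthogonal complement of $U_1\oplus\cdots\oplus U_k$. Hence $\mathbb{R}^n=W\oplus U_1\oplus\cdots\oplus U_k$ with every summand $M$-invariant. In the basis formed by $\chi_1,\dots,\chi_k$ followed by bases of the $U_i$, the matrix of $M$ is block diagonal: the block representing $M|_W$ (which is $B$ or $B^T$) and then the scalar blocks $p_iI_{n_i-1}$. Therefore the characteristic polynomial factors as $\det(xI-B)\prod_i (x-p_i)^{n_i-1}$, and this is the multiset identity claimed.

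The only delicate point is Step 3 when $M$ is not symmetric. There $M$ need not preserve orthogonal complements, so one should not rely on them. The fix is to note that $W$ is still $M$-invariant, since each $J$-block maps indicator vectors to multiples of indicator vectors, and that each $U_i$ is $M$-invariant by Step 2. The direct sum decomposition is purely linear algebra and needs no orthogonality, so the block-diagonal argument goes through. In the paper's application $D(G)$ is symmetric, so this subtlety does not arise.
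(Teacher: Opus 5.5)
Your proof is correct, and it is a complete argument for the multiset identity in Lemma \ref{l3}. The paper itself gives no proof of either lemma; both are quoted with citations, so there is no in-paper route to compare against. What you give is the standard equitable-partition argument. With $S=[\chi_1\ \cdots\ \chi_k]$ one has $MS=SB$. The columns of $S$ span an $M$-invariant complement of $U_1\oplus\cdots\oplus U_k$, and $M$ acts as $p_i$ on each $U_i$. This gives $\det(xI-M)=\det(xI-B)\prod_i(x-p_i)^{n_i-1}$, which is exactly the multiset identity.

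There are two small things to tidy.

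First, the statement also contains the general Lemma \ref{l2}, for arbitrary blocks with constant row sums $c_{ij}$. You took it as given, but your Step 3 computation proves it verbatim. The $i$-th block of $M\chi_j$ is $M_{ij}\mathbf 1_{n_j}=c_{ij}\mathbf 1_{n_i}$, so $MS=S\overline{M}$. Since $S$ has full column rank, $\overline{M}y=\lambda y$ with $y\neq 0$ gives $M(Sy)=\lambda Sy$ with $Sy\neq 0$. Your Step 3 also makes the appeal to Lemma \ref{l2} in Step 1 redundant, and it yields multiplicities, which Lemma \ref{l2} alone does not.

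Second, the hedge about symmetry in Step 3 is unnecessary. The identity $M\chi_j=\sum_i b_{ij}\chi_i$ follows from row sums alone for any $M$ of the given form. The matrix of $M|_W$ in the basis $\chi_1,\dots,\chi_k$ is $B$ itself, not $B^T$, because its $j$-th column records the coordinates of $M\chi_j$. Likewise, Step 2 only uses that every block in the $i$-th block column is a multiple of $J$, plus $p_iI$ on the diagonal. So symmetry is never needed, as your last paragraph correctly concludes.
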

The following lemma gives the distance spectrum of $\overline{K_m}\nabla C_n$.
\begin{lemma}\label{l4}
	The distance spectrum of $\overline{K_m}\nabla C_n$ is
	\begin{align*}
	\Bigg\{-2^{m-1},(m+n-3) \pm\sqrt{(m+n-3)^2-(3mn-8m-4n+8)},\\
	-2-2cos \left(\frac{2k\pi}{n}\right): k=1,2,\dots,n-1\Bigg\}.
		\end{align*}
	\end{lemma}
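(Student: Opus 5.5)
The plan is to write down $D(\overline{K_m}\nabla C_n)$ explicitly in block form and then read off its spectrum in two parts. One part comes from eigenvectors orthogonal to the all-ones vectors of each block. The other comes from the $2\times 2$ quotient matrix of Lemma \ref{l2}.

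First I determine the distances. Order the vertices so that the $m$ vertices of $\overline{K_m}$ come first, followed by the $n$ vertices of $C_n$.
\begin{itemize}
\item Two distinct vertices of $\overline{K_m}$ are nonadjacent but have a common neighbour in $C_n$, so they are at distance $2$.
\item Every vertex of $\overline{K_m}$ is adjacent to every vertex of $C_n$, so those distances are $1$.
\item Two vertices of $C_n$ are at distance $1$ if they are adjacent on the cycle. Otherwise they are at distance $2$, through any vertex of $\overline{K_m}$ (here $m\ge 1$ suffices).
\end{itemize}
Writing $A=A(C_n)$, this gives
\[
D=\begin{pmatrix} 2(J_m-I_m) & J_{m,n}\\ J_{n,m} & 2(J_n-I_n)-A\end{pmatrix}.
\]
This holds for all $n\ge 3$; when $n=3$ there are simply no nonadjacent cycle pairs, and the formula still applies.

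Next I find the eigenvectors orthogonal to the all-ones vectors.
\begin{itemize}
\item Take $x\in\mathbb{R}^m$ with $x\perp\mathbf{1}_m$. Then $(x,0)^T$ is an eigenvector of $D$ with eigenvalue $-2$, because $J_m x=0$ and $J_{n,m}x=0$. This gives $-2$ with multiplicity $m-1$, which is the entry written $-2^{m-1}$ in the statement.
\item The cycle $C_n$ is $2$-regular, so its adjacency eigenvectors for the eigenvalues $2\cos(2k\pi/n)$, $k=1,\dots,n-1$, from Lemma \ref{l1} can be chosen orthogonal to $\mathbf{1}_n$. For such a $y$, the vector $(0,y)^T$ satisfies $D(0,y)^T=\bigl(0,(-2-2\cos(2k\pi/n))y\bigr)^T$, since $J_n y=0$ and $J_{m,n}y=0$. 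This gives the $n-1$ eigenvalues $-2-2\cos(2k\pi/n)$.
\end{itemize}

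Finally I handle the remaining two eigenvalues, which are the eigenvalues of the quotient matrix. Each block of $D$ has constant row sums, so by Lemma \ref{l2} the quotient matrix is
\[
\overline{M}=\begin{pmatrix} 2m-2 & n\\ m & 2n-4\end{pmatrix}.
\]
Its entries come from the row sums: $2(m-1)=2m-2$ for the top-left block, and $2(n-1)-2=2n-4$ for the bottom-right block, because each cycle vertex has two neighbours at distance $1$. The trace of $\overline{M}$ is $2(m+n-3)$. Its determinant is $(2m-2)(2n-4)-mn=3mn-8m-4n+8$. Hence its eigenvalues are
\[
(m+n-3)\pm\sqrt{(m+n-3)^2-(3mn-8m-4n+8)}.
\]
Their eigenvectors are constant on each of the two blocks, so they are orthogonal to all the eigenvectors constructed above. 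Counting gives $(m-1)+(n-1)+2=m+n$ eigenvalues, so the spectrum is complete.

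The only point that needs care is the distance computation, in particular checking that nonadjacent cycle vertices really are at distance $2$ and not more. The rest is routine block algebra.
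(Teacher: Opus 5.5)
Your proposal is correct and follows essentially the same route as the paper: the block form of $D$, the eigenvalues $-2$ and $-2-2\cos(2k\pi/n)$ from vectors orthogonal to the block all-ones vectors, and the remaining two eigenvalues from the quotient matrix $\begin{pmatrix} 2m-2 & n\\ m & 2n-4\end{pmatrix}$. The paper leaves the orthogonality and dimension count implicit; your explicit version of it is what justifies that the multiplicities total $m+n$, even when a quotient eigenvalue coincides with $-2$ or with some $-2-2\cos(2k\pi/n)$.
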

\begin{proof}
		Consider $W_{m,n}=\overline{K_m}\nabla C_n, m\geq1, n\geq3$, the join of the empty graph with $m$ vertices and 2-regular cycle graph with $n$ vertices. The distance matrix of $W_{m,n}$ can be written as
		\begin{align*}
			D(W_{m,n})=\begin{bmatrix}
				A&B\\
				C & D
			\end{bmatrix},
		\end{align*}
where $A=2(J-I)_{m\times m}$, $B=J_{m\times n}$, $C= J_{n\times m}$, $D=2(J-I)_{n\times n}-A(C_n)$,
and $J_{m,n}$ is all ones matrix, $A(C_n)$ is the adjacency matrix of the cycle $C_n$.\\		
\indent Assume that $V(\overline { K_m})=\lbrace v_1, v_2,\dots, v_m\rbrace$ and $V(C_n)=\{u_1,u_2,\dots,u_n\}$ be the vertex set of the graphs $\overline{ K_m}$ and  $C_n$, respectively. Then the vertex set of  $W_{m,n}$ is $V(W_{m,n})=V(\overline { K_m})\cup V(C_n)$.\\
\indent Since $\overline{ K_m}, m\geq 2$ is an empty graph of order $m$, in $D(W_{m,n})$, the shortest distance between the vertices $v_i\in \overline{ K_m},~ i= 1,2,\dots,m$ is two. Let $e_m$ be all ones vector of length $m$, say $e_m=(1,1,\dots,1)^T$. In $D(W_{m,n})$, the block matrix $A$ has eigenvector $[e_m\hspace{5mm} 0^T_n]_{(m+n)\times1}$ corresponding to the eigenvalue $2m-2$, while the remaining eigenvectors corresponding to the eigenvalue $-2$ with multiplicity $m-1$ are orthogonal to $[e_m\hspace{5mm} 0^T_n]_{(m+n)\times1}$.\\
Also, $A(C_n)$ has all ones vector $e_n=(1,1,\dots,1)^T$ as an eigenvector corresponding to the eigenvalue $2$, the remaining eigenvectors are orthogonal to $e_n$. In $D(W_{m,n})$, the block matrix $D$ has an eigenvector  $[0^T_m\hspace{5mm} e_n]_{(m+n)\times1}$ corresponding to the eigenvalue $2n-4$ while the remaining eigenvectors corresponding to the eigenvalues $-2-2cos  \left(\frac{2k\pi}{n}\right)$, $k= 1,2,\dots,n-1$, are orthogonal to  $[0^T_m\hspace{5mm} e_n]_{(m+n)\times1}$. The remaining two distance eigenvalues are $\lbrace (m+n-3)\pm\sqrt{(m+n-3)^2-(3mn-8m-4n+8)}\rbrace$, which are the zeros of the characteristic polynomial of the quotient matrix $\begin{bmatrix}
	2m-2 & n\\
	m &2n-4
\end{bmatrix}$. This completes the proof.
	\end{proof}

	In the following lemma, we find the distance spectrum of the join of $a$ copies of the empty graph  $\overline{K_m}$ and the cycle $C_n$.

	\begin{lemma}\label{l5}
		The distance spectrum of the extended $EGW(a,m,n)$ consists of the eigenvalues
		\begin{align*}
			\Bigg\{-2^{(am-1)},(am+n-3)\pm\sqrt{(am+n-3)^2-(3amn-8am-4n+8)},\\	-2-2cos \left(\frac{2k\pi}{n}\right); k=1,2,\dots,n-1\Bigg\}.
		\end{align*}
	\end{lemma}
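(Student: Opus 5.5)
The plan is to reduce the statement to Lemma \ref{l4}. The union of $a$ copies of the edgeless graph $\overline{K_m}$ is again an edgeless graph, on $am$ vertices, so $a\overline{K_m}\cong\overline{K_{am}}$. Since the join depends only on the two graphs being joined, we get
\[
EGW(a,m,n)=a\overline{K_m}\nabla C_n\cong \overline{K_{am}}\nabla C_n = W_{am,n}.
\]
I would state this isomorphism first, with a one-line justification: the vertex sets agree, there are no edges inside the $a\overline{K_m}$ part, every vertex of that part is joined to every vertex of $C_n$, and $C_n$ keeps its own edges.

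Next I would note that distance matrices of isomorphic graphs are permutation-similar, so they have the same spectrum. Substituting $am$ for $m$ in Lemma \ref{l4} then gives the eigenvalue $-2$ with multiplicity $am-1$, the values $-2-2\cos\left(\frac{2k\pi}{n}\right)$ for $k=1,\dots,n-1$, and the two roots $(am+n-3)\pm\sqrt{(am+n-3)^2-(3amn-8am-4n+8)}$. That is exactly the claimed list.

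For robustness, I would also check the quotient computation directly rather than only quoting Lemma \ref{l4}. By Lemma \ref{l3}, the two-block equitable partition of $D(W_{am,n})$ has quotient matrix
\[
\begin{bmatrix} 2am-2 & n\\ am & 2n-4\end{bmatrix}.
\]
Its trace is $2(am+n-3)$ and its determinant is $(2am-2)(2n-4)-amn=3amn-8am-4n+8$. Solving the characteristic quadratic gives the two stated roots.

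I would also confirm the remaining eigenvalues with explicit eigenvectors. A vector supported on the $\overline{K_{am}}$ part whose entries sum to zero is an eigenvector for $-2$, which gives $am-1$ independent vectors. For $k=1,\dots,n-1$, an eigenvector of $A(C_n)$ orthogonal to $e_n$ (placed on the cycle part) is mapped by $2(J-I)-A(C_n)$ to $-2-2\cos(2k\pi/n)$ times itself, and the off-diagonal $J$ blocks annihilate it. The count $(am-1)+(n-1)+2=am+n$ shows the spectrum is complete.

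There is no genuine obstacle here. The only point needing care is making the isomorphism $a\overline{K_m}\cong\overline{K_{am}}$ explicit, since the proof of Lemma \ref{l4} relied on the vertices outside the cycle being pairwise at distance $2$. This still holds in $W_{am,n}$, because any two such vertices are non-adjacent and have common neighbours on $C_n$.
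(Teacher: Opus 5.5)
Your proof is correct and shorter than the paper's. The paper does not invoke Lemma \ref{l4}. Instead it works directly with $D(a\overline{K_m}\nabla C_n)$ partitioned into $a+1$ cells. It extracts $-2$ with multiplicity $m-1$ from each diagonal block $2(J-I)$, giving $am-a$ in total, along with the cycle eigenvalues $-2-2\cos(2k\pi/n)$. It then applies Lemma \ref{l3} to the $(a+1)\times(a+1)$ quotient to obtain $a-1$ further copies of $-2$ and a residual $2\times 2$ matrix.

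Your observation that $a\overline{K_m}$ is just the edgeless graph on $am$ vertices, so that $EGW(a,m,n)\cong W_{am,n}$, makes all of this unnecessary. The paper's computation in effect rediscovers it: its residual $2\times 2$ matrix is exactly the quotient of $W_{am,n}$, and $a(m-1)+(a-1)=am-1$. Your route also avoids the bookkeeping of the larger quotient; in the paper, that quotient's bottom row should read $m,\dots,m,2n-4$. It also explains why the subsequent $D$-integrality classification depends only on the product $am$. The finer partition would only be needed if the $a$ copies carried edges, as in $aK_m\nabla C_n$, where no such collapse is available.

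One minor point concerns your supplementary check. The $2\times 2$ quotient should be justified by Lemma \ref{l2} (constant block row sums) rather than Lemma \ref{l3}, because the cycle block $2(J-I)-A(C_n)$ is not of the form $sJ+pI$. Completeness of your eigenvalue count then follows because your $am+n-2$ explicit eigenvectors are orthogonal to the two cell-indicator vectors. The span of those two vectors is $D$-invariant and carries exactly the quotient eigenvalues.
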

	\begin{proof}
			Consider the graph $a\overline{K_m}\nabla C_n,~ a\geq 2, m\geq1, n\geq3$, the join of $a$ copies of the empty graph  $\overline{K_m}$ with $m$ vertices and 2-regular cycle graph with $n$ vertices. The distance matrix of $a\overline{K_m}\nabla C_n$ can be written as
			
			\[ D(a\overline{K_m}\nabla C_n)=\begin{pmatrix}\begin{array}{cccc|c}
					X_{11}& X_{12}  &\dots&X_{1a}&J_{a\times n}\\
					X_{21}&X_{22}&\dots&X_{2a}&J_{a\times n}\\
					\dots&\dots&\dots&\dots&\dots\\
					\dots&\dots&\dots&\dots&\dots\\
					X_{a1}&X_{a2}&\dots&X_{aa}&J_{a\times n}\\ \hline
					J_{n\times a}&J_{n\times a}&\dots&J_{n\times a}&2(J-I)_{n\times n}-A(C_n),\\
				\end{array}\\
			\end{pmatrix}\]
where \begin{align*}
		X_{ij}=\begin{cases}
			 2(J-I)_{a\times a}, \qquad \mbox{if} ~ i=j\\
		2J_{a\times a},\qquad \mbox{if}~  i\neq j,
		\end{cases}
		\end{align*}
	 $J$ is the all ones matrix and $A(C_n)$ is the adjacency matrix of cycle $C_n$.\\
\indent	Let $V_{p}(\overline{K_m})=\{u^p_1,u^p_2,\dots,u^p_m\}$ and $V(C_n)=\{ v_1,v_2,\dots,v_n\}$ be the vertex sets of the graphs $p^{th}$ copy of $\overline{K_m}$, where $1\leq p\leq a$, and $C_n$. The vertex set of $a\overline{K_m}\nabla C_n$ is $V(a\overline {K_m}\nabla C_n)=\cup_{p=1}^{a}V_{p}(\overline{K_m})\cup V(C_n)$.\\
\indent	Let $v^1_m=(1,1,\dots,1)^T$ be all ones vector of length, say $m$. Then in the matrix $ D(a\overline{K_m}\nabla C_n)$,  $v^1_m$ is an eigenvector of the block $X_{11}$ corresponding to the eigenvalue $2m-2$, while the remaining eigenvalues corresponding to $-2$ with algebraic multiplicity $m-1$ are orthogonal to $v^1_m$. Under these circumstances, $-2$ is an eigenvalue of $ D(a\overline{K_m}\nabla C_n)$ of multiplicity $m-1$ and the corresponding eigenvectors are orthogonal to $[v^1_m\hspace{3mm}, 0^T_n,\dots, 0^T_m, 0^T_n]_{(am+n)\times1}$. Similarly, for each diagonal block matrix $X_{ii}, ~1\leq i\leq a$, $-2$ is an eigenvalue of $ D(a\overline{K_m}\nabla C_n)$ of multiplicity $m-1$ and the corresponding eigenvectors are orthogonal to \begin{align*}
	 [0^T_m,0^T_m,\dots,v^i_m\hspace{3mm}\dots, 0^T_m, 0^T_n]_{(am+n)\times1}.
	\end{align*}
	Also, $A(C_n)$ and $2(J-I)_{n\times n}$ have all ones vector $e_n=(1,1,\dots,1)^T$ as an eigenvector corresponding to the eigenvalue $2$ and $2n-2$, respectively. Then, the block $2(J-I)_{n\times n}-A(C_n)$ has an eigenvector $e_n$ corresponding to the eigenvalue $2n-4.$ Therefore, $-2-2cos (\frac{2k\pi}{n}), k=1,2,\dots,n-1$ are the eigenvalues of $ D(a\overline{K_m}\nabla C_n)$ and the corresponding vectors are orthogonal to $[0^T_m,0^T_m,\dots, 0^T_m,e_n]_{(am+n)\times1}$.\\
\indent	In this way, we get  $-2-2cos (\frac{2k\pi}{n}), k=1,2,\dots,n-1$ and $-2$  with multiplicity $am-a$ as eigenvalues of $ D(a\overline{K_m}\nabla C_n)$. The remaining eigenvalues are the eigenvalues of the quotient matrix
	\[\begin{pmatrix}\begin{array}{cccc|c}
		2m-2&2m&\dots&2m&n\\
		\dots&\dots&\dots&\dots&\dots\\
	        2m&2m&\dots&2m-2&n\\	\hline
		n&n&n&n&2n-4
			\end{array}\\
	\end{pmatrix}_{(a+1)\times(a+1)}.\]
	By Lemma \ref{l3}, clearly $-2$ is an eigenvalue of the above matrix with multiplicity $a-1$ and the  remaining two eigenvalues are the zeros of the characteristic polynomial of the following matrix
	\begin{align*}
		\begin{bmatrix}
			2am-2&n\\
			am&2n-4
		\end{bmatrix}.
	\end{align*}
	By combining all the above, the complete spectrum of $ D(a\overline{K_m}\nabla C_n)$ is
	\begin{align*}
			\Bigg\{-2^{(am-1)},(am+n-3)\pm\sqrt{(am+n-3)^2-(3amn-8am-4n+8)},\\	-2-2cos \left(\frac{2k\pi}{n}\right): k=1,2,\dots,n-1\Bigg\}.
	\end{align*}
		\end{proof}

Now, we have the following result.

\begin{theorem}
The graph $\overline{K_m}\nabla C_n$ with $m+n$ vertices is $D$-integral if and only if one of the following holds:\\
(i)  $n=3,~ m=1$, (ii)  $n=3, ~  m= 4$, (iii)  $n=4,~   m= 2$, (iv)  $n=6, ~  m= 4$, (v)  $n=6, ~  m= 12$.
\end{theorem}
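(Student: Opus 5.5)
The plan is to read off integrality directly from the distance spectrum of $\overline{K_m}\nabla C_n$ given in Lemma \ref{l4}. The eigenvalue $-2$ (with multiplicity $m-1$) is always an integer, so $D$-integrality comes down to two requirements.

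First, every number $-2-2\cos\left(\frac{2k\pi}{n}\right)$ with $1\le k\le n-1$ must be an integer. By Lemma \ref{l1} this means every nontrivial adjacency eigenvalue of $C_n$ is an integer, i.e.\ $C_n$ is integral. The number $2\cos(2\pi/n)$ is an algebraic integer lying in $(-2,2]$. If it is rational it must therefore lie in $\{-1,0,1\}$, so $\cos(2\pi/n)\in\{-\tfrac12,0,\tfrac12\}$ and $n\in\{3,4,6\}$. Conversely, for $n=3,4,6$ all the values $2\cos(2k\pi/n)$ lie in $\{-2,-1,0,1\}$. So $n\in\{3,4,6\}$ is both necessary and sufficient for this part of the spectrum.

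Second, the two quotient eigenvalues $(m+n-3)\pm\sqrt{\Delta}$ must be integers, where
\begin{align*}
\Delta=(m+n-3)^2-(3mn-8m-4n+8).
\end{align*}
Since $m+n-3$ is an integer, this holds exactly when $\Delta$ is a perfect square, say $\Delta=t^2$ with $t\ge 0$ an integer. I will treat the three values of $n$ separately and in each case reduce $\Delta=t^2$ to a difference of squares equal to a small constant.
\begin{itemize}
\item For $n=3$: $\Delta=m^2-m+4$. Multiplying by $4$ gives $(2t)^2-(2m-1)^2=15$, so $(2t-2m+1)(2t+2m-1)=15$. The positive factorizations $1\cdot 15$ and $3\cdot 5$ give $m=4$ and $m=1$.
\item For $n=4$: $\Delta=(m-1)^2+8$, so $(t-m+1)(t+m-1)=8$. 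Both factors must have the same parity, which forces the factorization $2\cdot 4$. This gives $|m-1|=1$, hence $m=2$, since $m\ge 1$.
\item For $n=6$: $\Delta=(m-2)^2+21$, so $(t-|m-2|)(t+|m-2|)=21$. The factorizations $1\cdot21$ and $3\cdot7$ give $|m-2|=10$ or $2$, hence $m=12$ or $m=4$; the other solutions are negative or zero and are discarded.
\end{itemize}

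For the converse I will plug each of the five pairs back into Lemma \ref{l4} and confirm that the whole spectrum is integral.

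The main obstacle is the first step: justifying rigorously that $C_n$ is integral only for $n\in\{3,4,6\}$. I would argue it via the algebraic-integer observation above, or alternatively via the degree $\varphi(n)/2$ of $\cos(2\pi/n)$ over $\mathbb{Q}$. The Diophantine analysis in the second step is routine. The only care needed there is to use $|m-2|$ and $|m-1|$ correctly and to respect the constraint $m\ge 1$.
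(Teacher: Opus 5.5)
Your proposal is correct and follows essentially the same route as the paper: read the spectrum off Lemma \ref{l4}, force $n\in\{3,4,6\}$ from the cycle eigenvalues, then reduce the perfect-square condition on $(m+n-3)^2-(3mn-8m-4n+8)$ to a difference of squares equal to $15$, $8$ or $21$ and factor. Your version is slightly more careful than the paper's, since you justify $n\in\{3,4,6\}$ and track signs and the constraint $m\ge 1$; the one cosmetic slip is that for $n\ge 3$ one has $2\cos(2\pi/n)\in[-1,2)$, not $(-2,2]$, and it is this range that excludes the integer value $2$.
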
			
\begin{proof} By Lemma \ref{l4}, the distance spectrum of $\overline{K_m}\nabla C_n$ consists of the eigenvalue $-2$ with multiplicity $m-1$ and $-2-2cos (\frac{2k\pi}{n}), k=1,2,\dots,n-1$ and the remaining two eigenvalues are $$(m+n-3)\pm\sqrt{(m+n-3)^2-(3mn-8m-4n+8)}.$$
\noindent Clearly, $(m+n-3)\pm\sqrt{(m+n-3)^2-(3mn-8m-4n+8)}$ are integers if and only if $(m+n-3)^2-(3mn-8m-4n+8)$ is a perfect square. Also, it is apparent that $2cos (\frac{2k\pi}{n})$ is an integer for any $1\leq k\leq n-1$ if and only if $n\in \{3,4,6\}$. Therefore, $\overline{K_m}\nabla C_n$ is $D$-integral if and only if $(m+n-3)^2-(m-4n+8)$ is a perfect square and $n\in \{3,4,6\}$.\\
\indent	 To	establish sufficiency involves straightforward calculations, indicating that $\overline{K_m}\nabla C_n$ becomes $D$-integral when condition (i), (ii), (iii) or (iv) holds. Now, delve  into the necessity aspect. Let $t=(m+n-3)^2-(3mn-8m-4n+8)=c^2$. We consider the following three cases.\\
	\textbf{	Case 1.} Let $n=3$. Then, we have  $t=m^2-m+4=c^2$, that is, $m^2-m+4-c^2=0$. This gives, $m=\frac {1\pm  \sqrt{4c^2-15}}{2}$. Since $m$ is an integer which is possible only if $4c^2-15$ is an odd perfect square number. Let
$4c^2-15=p^2$. This implies that $4c^2-p^2=15$, further implies that $(2c-p)(2c+p)=15$, implies that $c=4$ or  $c=2$.
Substituting back values of $c$, we get $m=4$ or $m=1$.\\
		\textbf{	Case 2.} Let $n=4$. We have  $t=(m+1)^2-4m+8=c^2$, that is, $ 8=c^2-(m-1)^2$. This gives, $8=(c+1-m)(c-1+m)$. Now, $8=8\times1=2\times4$. For the case $(c+1-m)(c-1+m)=8\times1$, we have $2c=9$, which is not possible. For the case $(c+1-m)(c-1+m)=2\times4$, we have $c=3$, which gives $m=2$.\\
				\textbf{	Case 3.} Let $n=6$. So, we have  $t=(m+3)^2-10m+16=c^2$, that is, $21=c^2-(m-2)^2$. We have $21=3\times7=1\times21$. Now, For the case $(c-m+2)(c+m-2)=1\time211$, we have $c=11$, which gives $m=12$. For the case $(c-m+2)(c+m-2)=3\times7$, we have $c=5$, which gives $m=4$.
	\end{proof}

\begin{theorem}
	Let $a\geq1$ and $n\geq3$. Then the extended graph $EGW(a,m,n)$ on $am+n$ vertices  is $D$-integral if and only if the ordered triple $(a,m,n)\in$ S, where
		\begin{align*}
		S=\{(1,4,3),(4,1,3),(2,2,3),(1,1,3), (1,2,4),(2,1,4),(1,4,6),(2,2,6),(4,1,6),\\(1,12,6),(2,6,6),(3,4,6),(4,3,6),(6,2,6),(12,1,6)\}
		\end{align*}
\end{theorem}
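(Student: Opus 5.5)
The plan is to reduce the statement to the preceding theorem by observing that the spectrum in Lemma \ref{l5} depends on $a$ and $m$ only through the product $N=am$. Structurally this is because each block $X_{ij}$ with $i\neq j$ equals $2J$: any two vertices in different copies of $\overline{K_m}$ are at distance $2$, just like two vertices in the same copy. Hence $a\overline{K_m}$ is simply the empty graph $\overline{K_{am}}$, and $EGW(a,m,n)\cong \overline{K_{am}}\nabla C_n$. Comparing Lemma \ref{l5} with Lemma \ref{l4} (with $m$ replaced by $am$) confirms that the two spectra coincide.

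So I would argue as follows. By Lemma \ref{l5}, the eigenvalues $-2$ are always integers. The eigenvalues $-2-2\cos(2k\pi/n)$ for $k=1,\dots,n-1$ are all integers if and only if $n\in\{3,4,6\}$: taking $k=1$, the value $2\cos(2\pi/n)$ is an algebraic integer of degree $\varphi(n)/2$, so it is an integer only when $\varphi(n)\le 2$ and $n\ge 3$, and for $n=3,4,6$ every $k$ gives an integer. The remaining two eigenvalues
\[
(am+n-3)\pm\sqrt{(am+n-3)^2-(3amn-8am-4n+8)}
\]
are integers exactly when the discriminant is a perfect square. This is precisely the Diophantine condition already solved in the preceding theorem with $m$ replaced by $N=am$. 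Invoking that theorem, $EGW(a,m,n)$ is $D$-integral if and only if $(N,n)\in\{(1,3),(4,3),(2,4),(4,6),(12,6)\}$.

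It then remains only to enumerate the factorizations $N=am$ with $a,m\ge1$:
\begin{itemize}
\item for $n=3$: $N=1$ gives $(1,1,3)$, and $N=4$ gives $(1,4,3),(2,2,3),(4,1,3)$;
\item for $n=4$: $N=2$ gives $(1,2,4),(2,1,4)$;
\item for $n=6$: $N=4$ gives $(1,4,6),(2,2,6),(4,1,6)$, and $N=12$ gives $(1,12,6),(2,6,6),(3,4,6),(4,3,6),(6,2,6),(12,1,6)$.
\end{itemize}
This is exactly the set $S$. Sufficiency follows from the same equivalence, or by direct substitution.

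The main point needing care is confirming that the perfect-square analysis of the preceding theorem is valid for all $N\ge1$. This includes $N=1$, which occurs when $a=m=1$; there Lemma \ref{l5} degenerates, since the multiplicity of $-2$ is $am-1=0$. I would recheck the three cases there, using the factorizations $(2c-p)(2c+p)=15$, $c^2-(N-1)^2=8$ and $c^2-(N-2)^2=21$, to make sure no solution was missed and that the sign choices are consistent with $N\ge1$.
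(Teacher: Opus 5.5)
Your proposal is correct and follows essentially the paper's route. The paper uses Lemma~\ref{l5} to reduce $D$-integrality to $n\in\{3,4,6\}$ plus a perfect-square discriminant depending only on $am$, re-solves the same three Diophantine equations in $am$, and enumerates factorizations; your observation that $a\overline{K_m}\cong\overline{K_{am}}$, hence $EGW(a,m,n)\cong\overline{K_{am}}\nabla C_n$, just lets you cite the preceding theorem instead of redoing that analysis, and your list of pairs $(N,n)$ and resulting triples matches $S$.
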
	
\begin{proof} By Lemmas \ref{l4} and \ref{l5}, the distance spectrum of $a\overline{K_m}\nabla C_n$ consists of the eigenvalues $-2$ with multiplicity $am-1$ and $	-2-2cos (\frac{2k\pi}{n}), k=1,2,\dots,n-1$, and the remaining two eigenvalues are $$(am+n-3)\pm\sqrt{(am+n-3)^2-(3amn-8am-4n+8)}.$$
\noindent	It is clear that $(m+n-3)\pm\sqrt{(am+n-3)^2-(3amn-8am-4n+8)}$ are integers if and only if $(am+n-3)^2-(3amn-8am-4n+8)$ is a perfect square. Moreover, it is obvious that $2cos (\frac{2k\pi}{n})$ is an integer for any $1\leq k\leq n-1$ if and only if $n\in \{3,4,6\}$. Therefore, the extended graph $EGW(a,m,n)$ is $D$-integral if and only if $(am+n-3)^2-(3amn-8am-4n+8)$ is a perfect square and $n\in \{3,4,6\}$.\\
 Let $t=(am+n-3)^2-(3amn-8am-4n+8)=c^2$. We consider the following three cases.\\
\textbf{	Case 1.} Let $n=3$. Then, we have  $t=(am)^2-am+4=c^2$, that is, $(am)^2-am+4-c^2=0$. This gives, $am=\frac {1\pm  \sqrt{4c^2-15}}{2}$. Since $am$ is an integer which is possible only if $4c^2-15$ is an odd perfect square. We suppose that $4c^2-15=p^2$, for some positive integer $p$. Then $4c^2-p^2=15$, that is, $(2c-p)(2c+p)=15$. Since $15=1\times15=5\times 3$, this gives $c=4$ or $c=2$. By substituting the value of $c$ in $am=\frac {1\pm  \sqrt{4c^2-15}}{2}$, we get $am=4$ or $am=1$. Since $4=4\times1=2\times 2=1\times4$, therefore the possible values of $a$ and $m$ are $a=1,~ m=4$ or $a=4,~ m=1$ or $a=2,~m=2$. Similarly, $am=1$ gives the possible values as $a=1, ~m=1$. Thus, for $n=3$, the possible extended distance integral graphs $EGW(a,m,n)$ are $EGW(1,4,3), ~EGW(4,1,3), ~EGW(2,2,3), ~EGW(1,1,3)$.\\
	\textbf{	Case 2.} Let $n=4$. We have  $t=(am+1)^2-4am+8=c^2$, that is, $(am-1)^2+8=c^2$. This gives $ c^2- (am-1)^2=8$. Now by factorizing, we get  $(c-am+1)(c+am-1)=2\times 4=1\times8$. Then the possible values of $c$ is 3. By back substituting the value of $c=3$, we get $am=2$.\\
\indent	Since $2=1\times 2=1\times2$, therefore, the possible values of $a$ and $m$ corresponding to $n=4$ are $a=1, ~m=2$  or $a=1,~ m=1$.  Therefore, for $n=4$, the possible extended distance integral graphs are $EGW(1,2,4)$ and $ EGW(2,1,4)$.\\
	\textbf{Case 3.} Let $n=6$. In this case, we have  $t=(am+3)^2-10am+16=c^2$, that is, $(am-2)^2+21=c^2$. This gives $(c-am+2)(c+am-2)=1\times21=3\times7$. Here the possible values $c$ are $5 $ and $11$. By back substituting the values of $c$, we get $am=4$ and $am=12$.  \\
	Since $12=1\times 12=2\times6=3\times4=4\times3=6\times2=12\times 1$ and $4=1\times 4=2\times2=4\times 1$, therefore, the possible values of $a$ and $m$ corresponding to $n=6$ are $a=1, ~m=4$  or $a=4,~ m=1$ or $a=2, ~m2$ or $a=1, ~m=12$ or $a=2, ~m=6$ or $a=3,~ m=4$ or $a=4,~ m=3$ or $a=6,~m=2$ or $a=12,~m=1$ .  Thus, for $n=6$, the possible extended  distance integral graphs $EGW(a,m,n)$  are
$EGW(1,4,6), EGW(2,2,6), EGW(4,1,6), EGW(1,12,6), EGW(2,6,6),\\ EGW(3,4,6)$, $EGW(4,3,6)$, $EGW(6,2,6)$, $EGW(12,1,6)$.
\end{proof}

The following observation gives the distance spectrum of the graph $K_{p,p}\nabla C_n$.

\begin{theorem}\label{t3}
	The distance spectrum of $K_{p,p}\nabla C_n$ is
	\begin{align*}
	\Bigg\{-2^{2p-2},-2-2cos\left(\frac{2k\pi}{n}\right); k=1,2,\dots,n-1,\\ \frac{2n+3p-6\pm\sqrt{(2n+3p-6)^2-16pn+48p+16n-32}}{2}\Bigg\}
	\end{align*}
	\end{theorem}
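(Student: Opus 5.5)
The plan is to follow the same block-decomposition strategy as in Lemmas \ref{l4} and \ref{l5}. Label the vertices so that the two colour classes $P_1,P_2$ of $K_{p,p}$ (each of size $p$) come first and the cycle vertices $u_1,\dots,u_n$ come last. Distances within a class are $2$, distances across the two classes are $1$, and every vertex of $K_{p,p}$ is adjacent to every cycle vertex. Two cycle vertices are at distance $1$ if adjacent on $C_n$ and otherwise at distance $2$, through any vertex of $K_{p,p}$. This gives
\[
D(K_{p,p}\nabla C_n)=\begin{pmatrix} 2(J-I)_{p} & J_{p\times p} & J_{p\times n}\\ J_{p\times p} & 2(J-I)_{p} & J_{p\times n}\\ J_{n\times p} & J_{n\times p} & 2(J-I)_{n}-A(C_n)\end{pmatrix}.
\]
Every block has constant row sums, so Lemma \ref{l2} applies with this three-part partition.

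Next I would exhibit the eigenvectors orthogonal to the characteristic vectors of the parts.
\begin{enumerate}
\item Take any $x\in\mathbb{R}^p$ with $x\perp e_p$ and place it on $P_1$, with zeros elsewhere. All $J$-blocks annihilate it and $2(J-I)x=-2x$, so it is an eigenvector for $-2$. Doing the same on $P_2$ gives $-2$ with total multiplicity $2p-2$.
\item Take an eigenvector $y\perp e_n$ of $A(C_n)$ with eigenvalue $2\cos(2k\pi/n)$, from Lemma \ref{l1}, and place it on the cycle. Then $(2(J-I)-A(C_n))y=(-2-2\cos(2k\pi/n))y$, and the off-diagonal $J$-blocks kill it. This gives the eigenvalues $-2-2\cos(2k\pi/n)$ for $k=1,\dots,n-1$.
\end{enumerate}
These vectors are mutually orthogonal and orthogonal to the three part-indicator vectors. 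The remaining three eigenvalues are therefore those of the quotient matrix
\[
Q=\begin{pmatrix} 2p-2 & p & n\\ p & 2p-2 & n\\ p & p & 2n-4\end{pmatrix}.
\]

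The remaining step is to diagonalize $Q$ by exploiting its symmetry in the first two parts. The vector $(1,-1,0)^T$ is an eigenvector of $Q$ with eigenvalue $(2p-2)-p=p-2$. On the symmetric subspace spanned by $(1,1,0)^T$ and $(0,0,1)^T$, $Q$ acts as
\[
\begin{bmatrix} 3p-2 & n\\ 2p & 2n-4\end{bmatrix}.
\]
This matrix has trace $3p+2n-6$ and determinant $(3p-2)(2n-4)-2pn=4pn-12p-4n+8$. Its eigenvalues are therefore
\[
\frac{2n+3p-6\pm\sqrt{(2n+3p-6)^2-16pn+48p+16n-32}}{2},
\]
which are exactly the two roots appearing in the statement.

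I expect the main obstacle to be the final bookkeeping of multiplicities. Counting gives $(2p-2)+(n-1)+2=2p+n-1$ eigenvalues, one fewer than the order $2p+n$. The missing eigenvalue is $p-2$, coming from the antisymmetric quotient vector $(1,-1,0)^T$. Lifted to $D$, this is the vector equal to $1$ on $P_1$, $-1$ on $P_2$ and $0$ on the cycle. A direct check confirms it is an eigenvector with eigenvalue $-2+p\cdot 1=p-2$, since its entries sum to zero and so the cycle coordinates vanish. In writing the proof I would verify this carefully and record $p-2$ explicitly alongside the listed eigenvalues, so that the spectrum is complete. As it stands, the statement's list appears to omit this eigenvalue. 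For instance, when $p=1$ the graph $K_{1,1}\nabla C_n$ has $n+2$ vertices, and the eigenvalue $-1=p-2$ must appear.
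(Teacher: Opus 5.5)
Your proof is correct and uses the same method the paper applies in Lemmas \ref{l4} and \ref{l5}: eigenvectors orthogonal to the part-indicator vectors, followed by the quotient matrix of the equitable partition. The paper itself gives no proof of this statement. Your correction is also right: the printed list has only $2p+n-1$ entries and omits the simple eigenvalue $p-2$, with eigenvector $(e_p^T,-e_p^T,0_n^T)^T$. The trace confirms this, since the listed values sum to $2-p$ while $D$ has zero trace. Because $p-2$ is an integer, the omission does not affect the paper's subsequent $D$-integrality theorem.
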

\noindent \textbf{Fact 1.}  An ordered triplet of positive integers $(a,b,c)$ is called Pythagorean triplet if $a^2+b^2=c^2$. We note that $(3,4,5)$ is the only Pythagorean triplet whose one component is $4$.

The following result gives a necessary and sufficient condition for the graph $K_{p,p}\nabla C_n$ to be $D$-integral.
\begin{theorem}
The graph $K_{p,p}\nabla C_n$ is $D$-integral if and only if (i)  $n=3,\hspace{1mm} p=1$ and (ii)  $n=6, \hspace{1mm}  p= 4$.
\end{theorem}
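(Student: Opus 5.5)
The plan is to read everything off the distance spectrum in Theorem~\ref{t3}. Integrality requires two things. First, all $-2-2\cos(2k\pi/n)$ must be integers; exactly as in the proofs of the two preceding theorems, this forces $n\in\{3,4,6\}$. Second, the two quotient eigenvalues
\[
\tfrac12\bigl(2n+3p-6\pm\sqrt{\Delta}\,\bigr),\qquad \Delta=(2n+3p-6)^2-16pn+48p+16n-32,
\]
must be integers. This holds if and only if $\Delta=c^2$ for an integer $c\ge 0$ with $c\equiv 2n+3p-6 \pmod 2$.

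Before using Theorem~\ref{t3}, I would recheck its list. Take the three-part quotient with classes the two sides of $K_{p,p}$ and $C_n$, with rows $(2p-2,\,p,\,n)$, $(p,\,2p-2,\,n)$ and $(p,\,p,\,2n-4)$. Its symmetric part gives the $2\times 2$ matrix $\begin{bmatrix}3p-2&n\\2p&2n-4\end{bmatrix}$, which yields $\Delta$ above. Its antisymmetric vector gives one more eigenvalue $p-2$, which the stated list seems to omit. Since $p-2$ is an integer, this does not affect the criterion.

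For each $n$, I complete the square to turn $\Delta=c^2$ into a difference of squares and factor:
\begin{itemize}[label={}]
\item $n=3$: $\Delta=9p^2+16$, so $(c-3p)(c+3p)=16$. Equivalently, by Fact~1, $(3p,4,c)=(3,4,5)$. This gives $p=1$, $c=5$, and the eigenvalues $4$ and $-1$.
\item $n=6$: $\Delta=(3p-2)^2+96$. Run through the even factor pairs of $96$, keeping only those with $3p-2\equiv 1\pmod 3$. The only survivor is $(4,24)$, giving $p=4$, $c=14$, and the eigenvalues $16$ and $2$.
\item $n=4$: $9\Delta=(9p-2)^2+320$, so I put $x=3c$ and $y=9p-2$ and solve $(x-y)(x+y)=320$ with both factors even and $y\equiv 7\pmod 9$.
\end{itemize}
The parity condition must then be checked in each surviving case.

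The main obstacle, and where I expect trouble, is the case $n=4$. Going through the even factor pairs $(2,160)$, $(4,80)$, $(8,40)$, $(10,32)$, $(16,20)$ gives $y=79,38,16,11,2$. Two of these survive:
\begin{itemize}[label={}]
\item $y=16$ gives $p=2$, $\Delta=64$, and eigenvalues $8$ and $0$.
\item $y=79$ gives $p=9$, $\Delta=729$, and eigenvalues $28$ and $1$.
\end{itemize}
In both cases the cycle eigenvalues $-2,-4,-2$ are integers, so $K_{2,2}\nabla C_4$ and $K_{9,9}\nabla C_4$ would also be $D$-integral.

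So my plan would establish that the $D$-integral cases are exactly $(n,p)\in\{(3,1),(6,4),(4,2),(4,9)\}$. This contradicts the statement as written. I would first verify these two extra cases directly, by recomputing the distance matrix and checking that $K_{p,p}\nabla C_4$ has diameter $2$. If they stand, the statement should be amended to include $n=4$ with $p\in\{2,9\}$.
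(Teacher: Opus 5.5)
Your analysis is correct, and the statement as written is false. You follow the same route as the paper: read off the spectrum from Theorem~\ref{t3}, get $n\in\{3,4,6\}$ from the cycle eigenvalues, then require the discriminant to be a square case by case. For $n=3$ and $n=6$ you reach the paper's conclusions.

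The disagreement is in the paper's Case~2. Solving $9p^2-4p+36=c^2$ for $p$ gives $p=\frac{4\pm\sqrt{36c^2-1280}}{18}$, not $\frac{4\pm\sqrt{4c^2-1040}}{18}$ as written. The factorization of $1040$ that follows is therefore beside the point, and the solutions $p=2$ ($c=8$) and $p=9$ ($c=27$) are lost. The same coefficient slip ($4c^2$ for $36c^2$) appears in Case~3, but there the answer $p=4$ happens to survive. Your reduction $x=3c$, $y=9p-2$, $x^2-y^2=320$ is the correct form of that step.

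The two extra graphs are genuine, and you can confirm this without Theorem~\ref{t3}. Since $C_4\cong K_{2,2}$, the graph $K_{p,p}\nabla C_4$ is the complete multipartite graph $K_{p,p,2,2}$, whose distance matrix is $2(J-I)-A$.
\begin{itemize}
\item For $p=2$ this is the cocktail party graph $K_{2,2,2,2}$. Its distance spectrum is $8$, then $0$ with multiplicity $3$, then $-2$ with multiplicity $4$.
\item For $p=9$ the distance spectrum is $28$, $7$, $1$, $0$, and $-2$ with multiplicity $18$.
\end{itemize}
So the correct list is $(n,p)\in\{(3,1),(4,2),(4,9),(6,4)\}$.

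Two small fixes to your write-up. First, for $n=4$ the cycle eigenvalues $-2-2\cos(k\pi/2)$, $k=1,2,3$, are $-2,0,-2$, not $-2,-4,-2$; integrality is unaffected. Second, the parity condition never needs a separate check. We have $\Delta=T^2-4\det$ with $T=2n+3p-6$ and an integer determinant, so $\Delta=c^2$ already forces $c\equiv T\pmod 2$.

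Your remark that Theorem~\ref{t3} omits the eigenvalue $p-2$ is also correct: its list has only $2p+n-1$ entries for a graph on $2p+n$ vertices.
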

\begin{proof} From Theorem \ref{t3}, the distance spectrum of the graph $K_{p,p} \nabla C_n $ includes eigenvalues of $-2$ of multiplicity $2p-2$, as well as $-2-2cos(\frac{2k\pi}{n})$, where $k$ ranges from $1$ to $n-1$. Additionally, the remaining two eigenvalues are
\begin{align*}
					\frac{2n+3p-6\pm\sqrt{(2n+3p-6)^2-16pn+48p+16n-32}}{2}.
\end{align*}
It is evident that $2n+3p-6\pm\sqrt{(2n+3p-6)^2-16pn+48p+16n-32}$ will yield integer values if and only if $(2n+3p-6)^2-16pn+48p+16n-32$ is a perfect square. Furthermore, it is clear that $2\cos(\frac{2k\pi}{n})$ results in the integer values for any $1\leq k\leq n-1$ if and only if $n$ belongs to the set $\{3,4,6\}$.
Consequently, the graph $K_{p,p}\nabla C_n$ is $D$-integral if and only if $(2n+3p-6)^2-16pn+48p+16n-32$ is a perfect square and $n$ belongs to the set $\{3,4,6\}$. Let $t=(2n+3p-6)^2-16pn+48p+16n-32=c^2$. Now, we discuss the following three cases of $n$.\\
\textbf{Case 1.} Let $n=3$. Then, we have $t=(3p)^2+4^2=c^2$. Since each term on both sides is a perfect square and the left-hand side is the sum of two perfect square numbers with one number $4^2$, so according to Fact 1, $3p$ should be equal to $3$, implying that $p$ equals to $1$. Therefore, the only possible distance integral graph in this case is  $K_{1,1}\nabla C_3$.\\
\textbf{Case 2.} Let $n=4$. In this case, we have $t=(3p+2)^2-16p+32=c^2$, that is, $9p^2-4p+36=c^2$. This gives $p=\frac{4\pm\sqrt{4c^2-1040}}{18}$. We first check the values of $c$, for which $4c^2-1040$ is a perfect square number. Assume that $4c^2-1040=q^2$. Then $(2c-q)(2c+q)=1040$. The different possibilities to write $1040$ as a product of two factors are
				\begin{align*}
					1040=1\times1040=2\times520=4\times260=8\times130=10\times104\\
					=20\times52=40\times26=16\times65=80\times13=5\times208.
				\end{align*}
We eliminate the possibilities $1040=1\times1040=2\times520=8\times130=40\times26=16\times65=80\times13=10\times104=5\times208$, because none of these pairs has the sum a multiple of $4$. So we only consider two cases, namely $	 (2c-q)(2c+q)=1040=4\times260$ and $(2c-q)(2c+q)=1040=20\times52$. For the case $(2c-q)(2c+q)=4\times260$, we get $4c=264$, implying $c=66$ and for the case $(2c-q)(2c+q)=20\times52$, we have $c=18$. By substituting these values of $c$ in $p=\frac{4\pm\sqrt{4c^2-1040}}{18}$, we get $p=\frac{20}{18}$ and $p=\frac{132}{18}$, which is absurd. Thus, for $n=4$, there is no value of $p$ for which  $K_{p,p}\nabla C_n$ is distance integral.\\
\textbf{	Case 3.} Let $n=6$. Here, we have $t=(3p+6)^2-48p+64=c^2$, that is, $9p^2-12p+100=c^2$. This gives $p=\frac{12\pm\sqrt{4c^2-3456}}{18}$. Continuing with the same approach as in Case 2, we find the only integer value of $p$ is $4$. Therefore, the only possible $D$-integral graph in this case is $K_{4,4}\nabla C_6$. This completes the proof.
\end{proof}

Next, we show that there is no dumbbell graph that is D-integral.

\begin{lemma}\cite{19}\label{121}
		The distance spectrum of dumbbell graph $\boldsymbol{DB}(W_{m,n})$ consists of the set of eigenvalues
			\begin{align*}
			\Bigg\{-4^{m-1},0^{m-1},\frac{1}{2}[-(m+n+4) \pm\sqrt{(m+n+4)^2-16m}],\\\frac{1}{2}[(5m+5n-8) \pm\sqrt{25m^2+25n^2-14mn}],
			-2-2cos \Bigg(\frac{2k\pi}{n}\Bigg); k=2,\dots,n\Bigg\},
		\end{align*}
			where each of the eigenvalues $-2-2cos (\frac{2k\pi}{n}); k=2,3,\dots,n$ is of multiplicity $2$.\\
	\end{lemma}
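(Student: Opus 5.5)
Throughout, $v_1,\dots,v_m$ denote the vertices of $\overline{K_m}$ and $u_1,\dots,u_n$ those of $C_n$ in the first copy of $W_{m,n}$, and $v_i',u_j'$ their counterparts in the second copy. The plan is to write $D(\boldsymbol{DB}(W_{m,n}))$ as a symmetric $2\times 2$ block matrix whose two diagonal blocks are equal and whose two off-diagonal blocks are equal. Its spectrum is then the union of the spectra of the sum and the difference of the diagonal and off-diagonal blocks. Each of these two matrices will be handled as in Lemma \ref{l4}.

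\textbf{Distances.} I first check that no shortest path between two vertices of the same copy passes through the other copy: any such path uses two cross edges plus at least one further edge, so it is never shorter. Hence the diagonal blocks equal $D_1=D(W_{m,n})$, as in Lemma \ref{l4}. The cross distances are:
\[
d(v_i,v_i')=1,\quad d(v_i,v_j')=3\ (i\neq j),\quad d(v_i,u_j')=d(u_j,v_i')=2,\quad d(u_j,u_k')=3.
\]
The last holds because every path from one cycle to the other must use a cross edge $v_iv_i'$. So, ordering the vertices of each copy as $(\overline{K_m},C_n)$, the off-diagonal block is
\[
D_2=\begin{bmatrix} 3J_{m\times m}-2I_m & 2J_{m\times n}\\ 2J_{n\times m} & 3J_{n\times n}\end{bmatrix}.
\]
Since $D=\begin{bmatrix} D_1&D_2\\ D_2&D_1\end{bmatrix}$, the vectors $(x,\pm x)$ reduce the problem: $\sigma(D)=\sigma(D_1+D_2)\cup\sigma(D_1-D_2)$.

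\textbf{The sum $D_1+D_2$.} Its blocks are:
\begin{itemize}
\item top-left $5J-4I_m$;
\item off-diagonal blocks $3J$;
\item cycle block $5J-2I_n-A(C_n)$.
\end{itemize}
Vectors supported on the $\overline{K_m}$ part and orthogonal to $e_m$ give the eigenvalue $-4$ with multiplicity $m-1$. Eigenvectors of $A(C_n)$ orthogonal to $e_n$, supported on the cycle part, give $-2-2\cos(2k\pi/n)$ for the $n-1$ nonprincipal values of $k$. The remaining two eigenvalues come from the quotient matrix (Lemmas \ref{l2} and \ref{l3}):
\[
\begin{bmatrix} 5m-4&3n\\ 3m&5n-4\end{bmatrix}.
\]
Its trace is $5m+5n-8$. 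The discriminant is $(5m-5n)^2+36mn=25m^2+25n^2-14mn$, which yields $\frac12[(5m+5n-8)\pm\sqrt{25m^2+25n^2-14mn}]$.

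\textbf{The difference $D_1-D_2$.} Its blocks are:
\begin{itemize}
\item top-left $-J_{m\times m}$;
\item off-diagonal blocks $-J$;
\item cycle block $-J-2I_n-A(C_n)$.
\end{itemize}
The same vectors give $0$ with multiplicity $m-1$ and again $-2-2\cos(2k\pi/n)$ for the $n-1$ nonprincipal $k$. The quotient matrix is
\[
\begin{bmatrix} -m&-n\\ -m&-n-4\end{bmatrix},
\]
with trace $-(m+n+4)$ and determinant $4m$. Its eigenvalues are $\frac12[-(m+n+4)\pm\sqrt{(m+n+4)^2-16m}]$.

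\textbf{Conclusion.} Together these give $2(m+n)$ eigenvalues, matching the order of $D$. Each nonprincipal cycle eigenvalue appears once in each half, which accounts for multiplicity $2$; the indexing $k=2,\dots,n$ in the statement is the same set as $k=1,\dots,n-1$.

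The main obstacle is the distance computation. The point needing care is $d(u_j,u_k')=3$ for every pair: it requires checking that no route between the two cycles avoids going through a $\overline{K_m}$ vertex, which holds because the only cross edges are $v_iv_i'$. Everything after that is the routine reduction via Lemma \ref{l3}, exactly as in Lemma \ref{l4}.
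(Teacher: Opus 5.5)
The paper gives no proof of this lemma (it is quoted from the literature), so there is no argument of the authors to compare with. Your derivation is a correct, self-contained proof in the style of Lemma \ref{l4}.

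Your cross distances are right: $d(v_i,v_j')=3$ for $i\neq j$ and $d(u_j,u_k')=3$. As a check, they give transmissions $5m+3n-4$ for hub vertices and $3m+5n-4$ for cycle vertices, and these reproduce the $D^L$-spectrum quoted in Lemma \ref{l6}. The following steps also check out:
\begin{itemize}
\item the splitting into $D_1\pm D_2$;
\item the eigenvalues $-4$, $0$ and $-2-2\cos(2k\pi/n)$;
\item both quotient matrices, with their traces and discriminants.
\end{itemize}
One small point of rigor: Lemma \ref{l3} does not literally apply, because the cycle block $5J-2I_n-A(C_n)$ is not of the form $sJ+pI$. The correct justification is this. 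The span of the two block indicator vectors is invariant under $D_1\pm D_2$, and it is the orthogonal complement of the $m+n-2$ eigenvectors you already exhibited. Hence the two remaining eigenvalues, with multiplicity, are exactly those of the quotient.

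The genuine error is your final remark that $k=2,\dots,n$ is the same set as $k=1,\dots,n-1$. The two ranges do not give the same multiset of values $-2-2\cos(2k\pi/n)$. The range $2,\dots,n$ contains $k=n$, which yields $-4$; that is the contribution of the principal eigenvector $e_n$, which you correctly absorbed into the quotient matrices. It also omits $k=1$, so $-2-2\cos(2\pi/n)$ would get multiplicity $2$ from the cycle part instead of $4$.

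Consequently the lemma read literally is false, and your proof does not establish it. With $k=2,\dots,n$ the listed eigenvalues sum to $4\cos(2\pi/n)-4\neq 0$, whereas $\mathrm{tr}\,D=0$. With $k=1,\dots,n-1$ the sum is $0$, as it should be.

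What you have actually proved is the corrected statement with $k=1,\dots,n-1$ (equivalently $\cos(2(k-1)\pi/n)$ for $k=2,\dots,n$, presumably what was intended). Say that explicitly rather than asserting the two index ranges coincide. The misprint does not affect the paper's later integrality arguments: the extra value $-4$ is an integer, and $k=1$ and $k=n-1$ give the same value.
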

	\begin{theorem}
		There are no $D$-integral  dumbbell graphs.
	\end{theorem}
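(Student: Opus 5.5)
We use the explicit distance spectrum of $\boldsymbol{DB}(W_{m,n})$ from Lemma \ref{121} and show that, for every $m\geq 2$ and $n\geq 3$, at least one of its eigenvalues is not an integer. As in the previous theorems, the eigenvalues $-2-2\cos\left(\frac{2k\pi}{n}\right)$, $k=2,\dots,n$, are all integers only if $2\cos\left(\frac{2k\pi}{n}\right)\in\mathbb{Z}$ for every such $k$. This forces $n\in\{3,4,6\}$. Beyond that, $D$-integrality requires both
\begin{align*}
t_1=(m+n+4)^2-16m \quad\text{and}\quad t_2=25m^2+25n^2-14mn
\end{align*}
to be perfect squares. (Squareness is necessary; parity of the half-sums would matter only for sufficiency, so we can ignore it.)

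The plan is to use $t_1$ as the filter, since it completes to a difference of squares with a small constant. For each $n\in\{3,4,6\}$ we get finitely many candidates for $m$, and then we check $t_2$ on those candidates.

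\textbf{Case $n=3$.} Here $t_1=(m+7)^2-16m=(m-1)^2+48$. Setting $t_1=c^2$ gives $(c-m+1)(c+m-1)=48$, where the two factors have the same parity. The admissible factorizations are $2\cdot 24$, $4\cdot 12$ and $6\cdot 8$, giving $m=12,5,2$. For these values $t_2=25m^2-42m+225$ equals $3321$, $640$ and $241$ respectively. None of these is a square, since $57^2<3321<58^2$.

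\textbf{Case $n=4$.} Here $t_1=m^2+64$, so $(c-m)(c+m)=64$. The same-parity factorizations are $2\cdot 32$ and $4\cdot 16$, giving $m=15$ and $m=6$. Then $t_2=25m^2-56m+400$ equals $5185=72^2+1$ and $964=31^2+3$. Neither is a square.

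\textbf{Case $n=6$.} Here $t_1=(m+2)^2+96$, so $(c-m-2)(c+m+2)=96$. The same-parity factorizations are $2\cdot48$, $4\cdot24$, $6\cdot16$ and $8\cdot12$, giving $m=21$, $8$, $3$, and $m=0$. The value $m=0$ is excluded. Then $t_2=25m^2-84m+900$ equals $10161$, $1828$ and $873$. These lie strictly between consecutive squares: $100^2<10161<101^2$, $42^2<1828<43^2$ and $29^2<873<30^2$.

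In every case at least one eigenvalue of the form $\frac{1}{2}[(5m+5n-8)\pm\sqrt{t_2}]$ is irrational, so no dumbbell graph $\boldsymbol{DB}(W_{m,n})$ is $D$-integral.

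The main obstacle is making sure the factor-pair enumeration is exhaustive. Only pairs of equal parity with the smaller factor first need to be listed, and the degenerate solutions $m\leq 1$ must be discarded. Beyond that, the argument is a finite check.
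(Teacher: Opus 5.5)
Your proof is correct and follows the same route as the paper. You restrict to $n\in\{3,4,6\}$ via the cycle eigenvalues, turn $(m+n+4)^2-16m=c^2$ into a same-parity factorization that gives finitely many $m$, and then check that $25m^2+25n^2-14mn$ is not a square for each candidate. Your version is more complete than the paper's: you carry out the $n=6$ case explicitly, obtaining $m=21,8,3$, where the paper only says ``proceeding similarly,'' and your arithmetic avoids the slips in the paper's Cases 1 and 2. The only small omission is the pair $8\cdot 8$ for $n=4$, which gives the excluded value $m=0$.
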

	\noindent \textbf{Proof.} By Lemma \ref{121} , the $D$-spectrum of the dumbbell graph $\boldsymbol{DB}(W_{m,n})$ is given by
		\begin{align*}
		\Bigg\{-4^{m-1},0^{m-1},\frac{1}{2}[-(m+n+4) \pm\sqrt{(m+n+4)^2-16m}],\\\frac{1}{2}[(5m+5n-8) \pm\sqrt{25m^2+25n^2-14mn}],
		-2-2cos (\frac{2k\pi}{n}); k=2,\dots,n\Bigg\},
		\end{align*}
		where each of the eigenvalues $-2-2cos (\frac{2k\pi}{n}); k=2,3,\dots,n$ is of multiplicity $2$.\\
		Since $-(m+n+4)$ and $(m+n+4)^2-16m$ are of the same parity and also  $(5m+5n-8)$ and $25m^2+25n^2-14mn $ are of the same parity, therefore it suffices to find the values of $m$ and $n$ for which both $\frac{1}{2}[-(m+n+4) \pm\sqrt{(m+n+4)^2-16m}$ and $\frac{1}{2}[(5m+5n-8) \pm\sqrt{25m^2+25n^2-14mn}]$ are perfect square numbers. As $cos \frac{2k\pi}{n}$ is an integer only if $n\in \{3,4,6\}$, therefore, $DB(W_{m.n})$ is $D$-integral if and only if $\sqrt {(m+n+4)^2-16m}$ and $\sqrt{25m^2+25n^2-14mn}$ are integers, where $n\in \{3,4,6\}$. Assume that $t=(m+n+4)^2-16m=c^2$. We consider the following three cases.\\
		\textbf{Case 1}.
	Let $n=3$. We have  $t=(m+7)^2-16m=c^2$, that is, $48 =c^2-(m-1)^2$ which is equivalent to $48=(c-(m-1))(c+(m+1))$. As $48=1\times48=2\times 24=3\times16=6\times8=4\times12$, the sum of factors of $48$ should be the multiple of $2$. Therefore, we consider only $(c-(m-1))(c+(m-1))=4\times12$, $(c-(m-1))(c+(m-1))=2\times24$ and $(c-(m-1))(c+(m-1))=6\times8$. This gives $c=8$, $c=13$ and $c=7$. Substituting back the values of $c$, we get $m=2$, $ m=5$  and $m=12$ as the only integer values of $m$. Now, $\sqrt{25m^2+25n^2-14mn}$ is not a perfect square number when $m=2,5,12$ and $n=3$. Therefore, $\boldsymbol{DB}(W_{m,n})$ is not $D$-integral in this case.\\
		\textbf{Case 2}. \\
		Let $n=4$. Then $t=(m+8)^{2}-16m=c^2$, that is, $64=(c-m)(c+m)$. So the possible values of $c$ are $8,10$ and $17$. Corresponding to these values, the possible values of m are $15$ and $6$. But for $m=6,15$ and $n=4$, we note that  $\sqrt{25m^2+25n^2-14mn}$ is not a perfect square number. Thus, $\boldsymbol{DB}(W_{m,n})$ is not $D$-integral in this case.\\
		\textbf{Case 3}.\\
		For $n=6$, we see that $t=(m+2)^{2}+96=c^2$. This gives, $(c-(m+2))(c+(m+2))=96$. Proceeding similarly as in above cases, we observe that there exists no positive integral value of $m$, for which both $\sqrt {(m+n+4)^2-16m}$ and $\sqrt{25m^2+25n^2-14mn}$ are integers. Hence no distance integral dumbbell graphs exist in this case.\qed

\section{$D^L$-integral dumbbell graphs}

In this section, we find all the possible distance Laplacian integral dumbbell graphs. We prove that there are only $8$ scattered graphs in this family. The following result gives the distance Laplacian spectrum of a dumbbell graph.

\begin{lemma}\label{l6}\em\cite{7}
		The distance Laplacian spectrum of dumbbell graph $\boldsymbol{DB}(W_{m,n})$ consists of the eigenvalues
	\begin{align*}
	\Bigg\{0,3m+3n,(5m+3n)^{m-1},(5m+3n-4)^{m-1},\Big\{(3m+5n-2)^{n-1}+2cos\left(\frac{2k\pi}{n}\right)\Big\}^{2};\\ k=1,2,\dots,n-1, \frac{1}{2} \Big[(9m+9n-4)\pm \sqrt {(3m-3n-4)^{2}+4mn}\Big]\Bigg\}
	\end{align*}
	\end{lemma}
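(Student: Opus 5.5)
The plan is to compute $D^L(\boldsymbol{DB}(W_{m,n}))$ explicitly and diagonalize it using the two-copy symmetry together with the structure of the blocks. I will treat the partition $V=H_1\cup C_1\cup H_2\cup C_2$, where $H_i$ is the set of the $m$ hub vertices of the $i$-th copy of $W_{m,n}$ and $C_i$ is its cycle. Let $u_j\in H_1$ correspond to $u'_j\in H_2$.

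\textbf{Distances.} They are as follows.
\begin{itemize}
\item Inside one copy, hub--hub distance is $2$, hub--cycle distance is $1$, and cycle--cycle distance is $1$ or $2$, giving the block $2(J-I)-A(C_n)$.
\item Across the copies, $d(u_j,u'_j)=1$ and $d(u_j,u'_k)=3$ for $j\neq k$.
\item A hub and a vertex of the other cycle are at distance $2$.
\item Two cycle vertices in different copies are at distance $3$.
\end{itemize}

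\textbf{Transmissions.} Summing rows, every hub has transmission $5m+3n-4$ and every cycle vertex has $3m+5n-4$. So $Tr$ is constant on each of the four classes, and every off-diagonal block of $D$ is constant except the within-copy cycle block and the $H_1$--$H_2$ block $3J-2I$.

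\textbf{Local eigenvectors.} For each vector of zero sum I test a symmetric and an antisymmetric extension to both copies.
\begin{itemize}
\item Take $x\perp e_m$ on $H_1$ and put $(x,0,\pm x,0)$. Row $u_j$ of $Dx$ gives $-2x_j\pm(x_j-3x_j)$, so $D$ acts as $-4$ (symmetric) or $0$ (antisymmetric). This yields $5m+3n$ and $5m+3n-4$, each with multiplicity $m-1$.
\item Take $y\perp e_n$ an eigenvector of $A(C_n)$ for $2\cos(2k\pi/n)$ and put $(0,y,0,\pm y)$. All $J$-blocks annihilate $y$, so $D$ acts as $-2-2\cos(2k\pi/n)$. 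This gives $3m+5n-2+2\cos(2k\pi/n)$, $k=1,\dots,n-1$, each with multiplicity $2$.
\end{itemize}
This is the intended reading of the garbled middle term of the statement.

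\textbf{Quotient matrix.} The remaining four eigenvalues come from the $4\times4$ quotient matrix of $D^L$, by Lemma \ref{l2}/Lemma \ref{l3}. Its eigenvectors are constant on classes and again split into symmetric and antisymmetric ones, giving two $2\times2$ matrices.
\begin{itemize}
\item The symmetric part has row sums zero, so it gives $0$ together with $\mathrm{trace}$, which I expect to equal $3m+3n$.
\item The antisymmetric part gives a quadratic whose roots should be $\frac12[(9m+9n-4)\pm\sqrt{(3m-3n-4)^2+4mn}]$.
\end{itemize}
The count $2(m-1)+2(n-1)+4=2(m+n)$ confirms completeness.

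The main obstacle is bookkeeping: getting the cross-copy distances right, in particular $3$ rather than $2$ between non-corresponding hubs and $3$ between the two cycles. The $2\times2$ antisymmetric quotient must also be computed carefully so that the discriminant matches. I would double-check it on a small case such as $m=2$, $n=3$ numerically.
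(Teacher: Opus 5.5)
Your argument is correct. The paper gives no proof of this lemma; it is only quoted from the literature. Your route is the same block-eigenvector plus quotient-matrix technique the paper uses for Lemmas \ref{l4} and \ref{l5}, and your reading of the garbled middle term is right.

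The two computations you left as expectations do work out. With the ordering $(H_1,C_1,H_2,C_2)$, the quotient of $D^L$ has rows $(3m+3n-2,\,-n,\,2-3m,\,-2n)$ for $H_1$ and $(-m,\,3m+3n,\,-2m,\,-3n)$ for $C_1$, with mirrored rows for $H_2$ and $C_2$.
\begin{itemize}
\item On symmetric class vectors it reduces to $\begin{pmatrix}3n&-3n\\ -3m&3m\end{pmatrix}$, which gives $0$ and $3m+3n$.
\item On antisymmetric class vectors it reduces to $\begin{pmatrix}6m+3n-4&n\\ m&3m+6n\end{pmatrix}$. Its trace is $9m+9n-4$, and its discriminant is $(6m+3n-4-3m-6n)^2+4mn=(3m-3n-4)^2+4mn$, exactly as stated.
\end{itemize}

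One point needs tightening: the completeness step. Lemma \ref{l3} does not literally apply, because the $H_1$--$H_2$ block is $3J-2I$ and the cycle blocks contain $A(C_n)$. Lemma \ref{l2} alone gives no multiplicities. Instead, argue as follows. Every block of $D^L$ has constant row sums; for the cycle block this uses that $C_n$ is $2$-regular. Hence the $4$-dimensional space of class-constant vectors is $D^L$-invariant, and by symmetry so is its orthogonal complement. Your $2(m-1)+2(n-1)$ local eigenvectors are mutually orthogonal and sum to zero on each class, so they form a basis of that complement. This is what makes your dimension count a proof.

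A minor wording slip: the within-copy cycle block is a diagonal block, not an off-diagonal one. What you actually use is the equitable-partition property above.
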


 Now, we show that there exist only $9$ distance Laplacian integral dumbbell graphs.																 
\begin{theorem}
	The only $D^L$-integral  dumbbell graphs are $\boldsymbol{DB}(W_{4,3})$, $\boldsymbol{DB}(W_{5,3})$,\\ $\boldsymbol{DB}(W_{5,4}),\boldsymbol{DB}(W_{6,4})$, $\boldsymbol{DB}(W_{14,4})$,$\boldsymbol{DB}(W_{7,6})$, $\boldsymbol{DB}(W_{8,6})$, $\boldsymbol{DB}(W_{12,6})$ and $\boldsymbol{DB}(W_{19,6})$.
\end{theorem}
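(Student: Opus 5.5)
By Lemma \ref{l6}, the distance Laplacian spectrum of $\boldsymbol{DB}(W_{m,n})$ consists of the integers $0$, $3m+3n$, $5m+3n$, $5m+3n-4$, together with the values $3m+5n-2+2\cos(2k\pi/n)$ for $k=1,\dots,n-1$ (each of multiplicity two) and the pair
\[
\tfrac12\Big[(9m+9n-4)\pm\sqrt{(3m-3n-4)^2+4mn}\Big].
\]
So I would reduce $D^L$-integrality to two independent conditions.

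\textbf{First condition.} As in the earlier proofs, $3m+5n-2+2\cos(2k\pi/n)$ is an integer for every $k$ if and only if $2\cos(2k\pi/n)\in\mathbb{Z}$ for all $k$, that is, $n\in\{3,4,6\}$.

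\textbf{Second condition.} The quadratic pair is integral if and only if $\Delta=(3m-3n-4)^2+4mn$ is a perfect square $c^2$ whose parity matches that of $9m+9n-4$. The parity requirement is automatic: $\Delta\equiv (m+n)^2\equiv m+n \pmod 2$ and $9m+9n-4\equiv m+n\pmod 2$. Hence $\boldsymbol{DB}(W_{m,n})$ is $D^L$-integral if and only if $n\in\{3,4,6\}$ and $\Delta$ is a perfect square. The rest is solving this Diophantine condition for each $n$ by completing the square in $m$ and factoring a difference of squares.

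\textbf{Case $n=3$.} Here $\Delta=9m^2-66m+169=(3m-11)^2+48$, so $(c-(3m-11))(c+(3m-11))=48$. Both factors must have the same parity, leaving the pairs $2\cdot 24$, $4\cdot 12$, $6\cdot 8$. These give $c=13,8,7$ and $3m-11\in\{\pm 11,\pm 4,\pm 1\}$. Keeping only positive integers $m$ (with $m\ge 2$) leaves $m=4$ and $m=5$.

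\textbf{Case $n=6$.} Here $\Delta=9m^2-108m+484=(3m-18)^2+160$. I would enumerate the same-parity factorizations of $160$, namely $2\cdot 80$, $4\cdot 40$, $8\cdot 20$, $10\cdot 16$. This gives $3m-18\in\{\pm 39,\pm 18,\pm 6,\pm 3\}$, and hence the candidates $m=19,12,8,7$ after discarding non-integral and nonpositive values.

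\textbf{Case $n=4$.} Here $\Delta=9m^2-80m+256$, and the linear coefficient is not divisible by $6$. I would therefore multiply by $9$ to get $(3c)^2-(9m-40)^2=704$. I would then run through the factorizations of $704=2^6\cdot 11$ into two even factors. For each, I would solve for $9m-40=\pm(\text{difference})/2$ and keep only solutions with $m\equiv$ integer and with $3c$ divisible by $3$. I expect this to yield exactly $m=5,6,14$.

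\textbf{Verification and main obstacle.} I would then check each resulting pair $(m,n)$ directly in Lemma \ref{l6} to confirm integrality, which gives sufficiency. The main obstacle is the bookkeeping in the $n=4$ case. The extra factor $9$ introduces spurious factor pairs, and each one must be checked for the divisibility conditions on both $c$ and $m$. I would handle this carefully by listing all even factor pairs of $704$ and testing each. The boundary requirement $m\ge 2$ must also be enforced in all three cases. Collecting the survivors gives the nine graphs listed in the statement.
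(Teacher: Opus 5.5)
Your reduction is correct and matches the paper's. By Lemma \ref{l6}, integrality is equivalent to $n\in\{3,4,6\}$ together with $\Delta=(3m-3n-4)^2+4mn$ being a perfect square, and the parity condition is automatic. Your cases $n=3$ and $n=4$ are also right. Your multiply-by-$9$ device for $n=4$ is a tidier version of the paper's factorization of $2816=4\cdot 704$, and it yields exactly $m=5,6,14$.

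The error is in the case $n=6$. From $3m-18\in\{\pm 39,\pm 18,\pm 6,\pm 3\}$ you obtain $m\in\{19,-7,12,0,8,4,7,5\}$. All of these are integers, so discarding the nonpositive ones leaves $m\in\{4,5,7,8,12,19\}$, not $\{7,8,12,19\}$. The negative branches $3m-18=-6$ and $3m-18=-3$ give admissible values:
\begin{itemize}
\item for $(m,n)=(4,6)$, $\Delta=(12-22)^2+96=196=14^2$, and the quadratic pair in Lemma \ref{l6} is $\tfrac12(86\pm14)=\{50,36\}$;
\item for $(m,n)=(5,6)$, $\Delta=(15-22)^2+120=169=13^2$, and the pair is $\tfrac12(95\pm13)=\{54,41\}$.
\end{itemize}
Every other eigenvalue is an integer because $n=6$.

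Consequently $\boldsymbol{DB}(W_{4,6})$ and $\boldsymbol{DB}(W_{5,6})$ are $D^L$-integral, and the statement as written is false. No argument can establish the ``only'' direction for the nine listed graphs. Your sufficiency check would succeed, but your necessity argument collects the wrong survivors. The paper's proof contains the same slip in its Case 3: it keeps only the positive branch for each factor pair, so $c=13$ gives $m=7$ but also $m=5$, and $c=14$ gives $m=8$ but also $m=4$. With the $n=6$ case corrected, your argument proves that $\boldsymbol{DB}(W_{m,n})$ is $D^L$-integral if and only if $(m,n)$ is one of the eleven pairs $(4,3),(5,3),(5,4),(6,4),(14,4),(4,6),(5,6),(7,6),(8,6),(12,6),(19,6)$.
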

\begin{proof} The $D^L$-spectrum of the dumbbell graph $\boldsymbol{DB}(W_{m,n})$ is given in Lemma \ref{l6}. Since $(9m+9n-4)$ and $(3m-3n-4)^2 $ are of the same parity, therefore, $\frac{1}{2} \Big[(9m+9n-4)\pm \sqrt {(3m-3n-4)^{2}+4mn}\Big]$ is an integer if and only if $(3m-3n-4)^{2}+4mn$ is a perfect square. Also, $cos \frac{2k\pi}{n}$ is an integer only if $n\in \{3,4,6\}$. Therefore, $DB(W_{m.n})$ is $D^L$-integral if and only if $\sqrt {(3m-3n-4)^{2}+4mn}$ is an integer where $n\in \{3,4,6\}$. Assume that $t=(3m-3n-4)^{2}+4mn=c^2$. We consider the following three cases.\\
			\textbf{Case 1}. Let $n=3$. In this case, we have  $t=(3m-11)^{2}+48=c^2$, that is, $48 =c^2-(3m-11)^2$, which is equivalent to $48=(c-(3m-11))(c+(3m-11))$. Since $48=1\times48=2\times 24=3\times16=6\times8=4\times12$, so the sum of the factors of $48$ should be the multiple of $2$. Therefore, we consider only $(c-(3m-11))(c+(3m-11))=4\times12$, $(c-(3m-11))(c+(3m-11))=2\times24$ and $(c-(3m-11))(c+(3m-11))=6\times8$. This gives $c=8$, $c=13$ and $c=7$. Substituting back the values of $c$, we get $m=4$ and $ m=5$ to be the only integer values of $m$. Thus, the only distance Laplacian integral dumbbell graphs when $n=3$, are  $DB(W_{4,3})$ and  $DB(W_{5,3})$.\\
	\textbf{Case 2}. Let $n=4$. Here, we have  $t=(3m-16)^{2}+16m=c^2$, that is, $9m^2-80m+256-c^2=0$. This gives, $m=\frac {80 \pm  \sqrt{36c^2-2816}}{18}$. Since $m$ is an integer, so $36c^2-2816$ should be perfect square number. Let	 $36c^2-2816=p^2$, that is, $(6c-p)(6c+p)=2816$. Now the different possibilities for the factors  $(6c-p)$ and $(6c+p)$ are
		 \begin{align*}
			2816=4\times704=2\times1408=1\times2816=256\times11=128\times22\\
			2816=64\times44=32\times88=16\times176=8\times352.
		\end{align*}
		The cases $(6c-p)(6c+p)=1\times2816$, $(6c-p)(6c+p)=256\times46$,  $(6c-p)(6c+p)=128\times22$ and $(6c-p)(6c+p)=2\times1408$ are not possible, because the sum of these factors is not a multiple of $6$. For the case $(6c-p)(6c+p)=8\times52$, $c=30$ and the only possible positive integral value of $m$ corresponding to $c=30$ is $m=14$. Similarly, for the cases $(6c-p)(6c+p)=16\times 176,(6c-p)(6c+p)=32\times 8,~ (6c-p)(6c+p)=64\times 44 $ and $(6c-p)(6c+p)=4\times704$, the only possible integer values of $m$ are $5$ and $6$. Therefore, the only distance Laplacian integral dumbbell graphs when $n=4$ are  $DB(W_{5,4}),DB(W_{6,4})$ and  $DB(W_{14,4})$.\\
		\textbf{ Case 3}. Taking $n=6$, we see that  $t=(3m-22)^{2}+24m=c^2$, that is, $(3m-18)^{2}+160=c^2$. This gives $c^2-(3m-18)^{2}=160$, which implies that $(c-(3m-18){2})(c-(3m-18){2})=160$. Now the different possibilities to write $160$ as product of two factors are
		\begin{align*}
			160=1\times160=5\times32=10\times 16=20\times8=40\times4=80\times2.
		\end{align*}
		The first two cases $160=1\times160=5\times32$ and $1605=5\times32$ are not possible, as the sum of these factors is not a multiple of $2$. For the case $160=10\times 16$,  $c=13$ and the only possible positive integral value of $m$ corresponding to $c=13$ is $m=7$. For the case $(c-(3m-18){2})(c-(3m-18){2})=20\times8 $, we have $c=14$. Using back the value of $c$ gives $m=8$. Similarly for the remaining two cases, the values of $m$ are $12$ and $19$. Thus, the only $D^L$-integral dumbbell graphs when $n=6$ are  $DB(W_{7,6})$, $DB(W_{8,6})$, $DB(W_{12,6})$ and $DB(W_{19,6})$.
\end{proof}

\vspace{1cm}

\noindent{\bf Acknowledgements.} The research of S. Pirzada is supported by NBHM-DAE research project No. NBHM/02011/20/2022. The research of Leonardo de Lima is partially supported by CNPq grant 305988/2025-5.\\

\noindent{\bf Data availability} Data sharing does not apply to this article as no data sets were generated or analyzed during the current study.\\

\noindent{\bf Conflict of interest.} The authors declare that they have no conflict of interest.

\end{document}